\newtheorem{theorem}{\bf Theorem}
\newtheorem{Remark}{\bf Remark}
\title{\textbf{\large Solvability Issues of Two Classes of Quasilinear and Nonlinear Integral Equations with a Sum-Difference Kernel on the Half-Line \footnote{The research of Khachatur A. Khachatryan was supported by the Higher Education and Science Committee of the Republic of Armenia (Grant No. 23RL-1A027).}}}
\author{A.Kh. Khachatryan, Kh.A. Khachatryan, H.S. Petrosyan}
\begin{document}
\date{}
\maketitle

\begin{abstract}
This paper investigates two classes of quasilinear and essentially nonlinear integral equations with a sum-difference kernel on the half-line. Such equations arise in various areas of physics, including the theory of radiative transfer in spectral lines, the dynamic theory of p-adic strings, and the kinetic theory of gases within the framework of the modified Bhatnagar–Gross–Krook (BGK) model.
Under specific conditions on the kernel and the nonlinear terms, we establish constructive existence theorems for non-negative, nontrivial, and continuous solutions. For the quasilinear case, we construct a one-parameter family of non-negative, nontrivial, linearly growing, and continuous solutions. For the class of essentially nonlinear equations, we prove the uniform convergence—at a geometric rate—of a specially constructed sequence of successive approximations to a non-negative, nontrivial, continuous, and bounded solution.
We also study the asymptotic behavior of the constructed solutions at infinity. Additionally, for the second class of equations, a uniqueness theorem is established within a certain subclass of non-negative, bounded, and nontrivial functions. The paper concludes with concrete examples of kernels and nonlinearities that satisfy all conditions of the proven theorems.
\end{abstract}
\textbf{References:} 22.\\
\textbf{Keywords:} monotonicity, successive approximations, concavity, continuity, asymptotic behavior. \\
\textbf{MSC:} 45G05

\section{Introduction}

\subsection{Setting of problem. Basic conditions.}
Consider the following classes of quasilinear and essentially nonlinear integral equations on the half-axis:
\begin{equation}\label{Khachatryan1.1}
    f(x) = \int\limits_0^{\infty} \big( K(x - t) - K(x + t) \big) \big( f(t) + \omega_1(t, f(t)) \big) dt, \quad x \in \mathbb{R}^+: = [0, +\infty),
\end{equation}
\begin{equation}\label{Khachatryan1.2}
    Q(B(x)) = \int\limits_0^{\infty} \big( K(x - t) - K(x + t) \big) \big( B(t) + \omega_2(t, B(t)) \big) dt, \quad x \in \mathbb{R}^+,
\end{equation}
with respect to the sought nonnegative and continuous on  $\mathbb{R}^+$ functions $f(x)$ and $B(x)$ respectively. In equations  \eqref{Khachatryan1.1} and \eqref{Khachatryan1.2} the kernel  $K$ satisfies the following conditions:
\begin{enumerate}
    \item[1)] $K(x) > 0, \, x \in \mathbb{R} := (-\infty, +\infty), \, K(-t) = K(t), \, t \in \mathbb{R}^+; \, K \in C^1(\mathbb{R}) \cap M^1(\mathbb{R}),$ \\
    where $C^1(\mathbb{R})$ --- is the space of continuously differentiable functions on the set  $\mathbb{R}$, and $M^1(\mathbb{R})$ --- is the space of bounded functions with their first derivative on the set $\mathbb{R},$
    \item[2)] $K(x)$ monotonically decreases on the set  $\mathbb{R}^+,$ $\int\limits_0^\infty K(t)dt=\frac{1}{2},$
    \item[3)] $\int\limits_{0}^{\infty}x K(x) dx < +\infty.$
\end{enumerate}

The nonlinearities $Q$, $\omega_1$ and $\omega_2$ satisfy the criticality conditions:
\begin{equation}\label{Khachatryan1.3}
    Q(0) = 0, \quad \omega_1(t, 0) = \omega_2(t, 0) \equiv 0 \quad \text{on } \mathbb{R}^+
\end{equation}
and have the following properties:
\begin{itemize}
    \item[$a_1)$] $\omega_1 \in C(\mathbb{R}^+ \times \mathbb{R}^+)$ and for each fixed  $t \in \mathbb{R}^+$ the function  $\omega_1(t, u)$ increases monotonically in  $u$ on $\mathbb{R}^+,$
    \item[$a_2)$] there exists  $\sup\limits_{u\in\mathbb{R}^+} \omega_1(t, u) =: \mu_1(t)$, where $\mu_1 \in C_0(\mathbb{R}^+)$ and $\int\limits_0^\infty t \mu_1(t) dt < +\infty,$ \\
     where $C_0(\mathbb{R}^+)$ --- is the space of continuous functions on  $\mathbb{R}^+$, that have a zero
limit at infinity,
    \item[$b_1)$] $\omega_2 \in C(\mathbb{R}^+ \times \mathbb{R}^+),$  for each fixed  $t \in \mathbb{R}^+$ the function  $\omega_2(t, u)$ increases monotonically in  $u$ on $\mathbb{R}^+$ and is concave on  $\mathbb{R}^+,$
    \item[$b_2)$] there exists a positive and summable on  $\mathbb{R}^+$ function  $\mu_2(t)$ with zero limit at infinity such that
 \[\omega_2(t,u)\leq \mu_2(t)u,\  t\in \mathbb{R}^+, u\in \mathbb{R}^+,\]
    \item[I)] $Q \in C(\mathbb{R}^+)$ and $Q(u)$ increases monotonically on the set  $\mathbb{R}^+,$
    \item[II)] $Q(u)$ is convex on  $\mathbb{R}^+$ and there exists a number $\xi > 0$ such that
    $
    Q(\xi) = (1 + M)\xi ,$ where $M:= \int\limits_0^\infty  \mu_2(t)dt \sup\limits_{x \in \mathbb{R}} K(x),
    $
        \item[III)] there exists a continuous monotonically increasing and concave mapping $\varphi \colon [0,1] \to [0,1]$ with the properties  $\varphi(0) = 0$, $\varphi(1) = 1$ such that
    \[
    G(\sigma u) \geq \varphi(\sigma)G(u), \quad \sigma \in (0,1), \quad u \in (0, (1+M)\xi),
    \]
    where $G(u)$ --- is the inverse function of the function  $Q$ on $\mathbb{R}^+.$
\end{itemize}

Note that the criticality condition \eqref{Khachatryan1.3} ensures the existence of trivial (zero) solutions for equations \eqref{Khachatryan1.1} and \eqref{Khachatryan1.2}. The main goal of this work is to construct nontrivial non-negative continuous solutions in certain functional spaces and to study their asymptotic behavior at infinity.

\subsection{Possible Applications and Background.}

Equations \eqref{Khachatryan1.1} and \eqref{Khachatryan1.2}, in addition to their purely mathematical interest, are of particular applied significance. Such equations arise in the nonlinear theory of radiation transfer in spectral lines, in the dynamic theory of $p$-adic open and open-closed strings for scalar tachyon fields, and in the kinetic theory of gases and plasma within the framework of the modified Bhatnagar–Gross–Krook model (see \cite{eng1}-\cite{cerc4}). Moreover, equation \eqref{Khachatryan1.2} appears in the mathematical theory of the geographical spread of epidemics within the framework of the modified Diekmann-Kaper model (see \cite{diek5}, \cite{khach6}). In the mentioned theories, the question of constructive solvability of equations \eqref{Khachatryan1.1} and \eqref{Khachatryan1.2} is of particular importance for building approximate solutions and studying certain qualitative properties of the constructed solutions.

In the case where the kernel \( K \) in \eqref{Khachatryan1.1} depends only on the difference of its arguments, and under various constraints on the function \(\omega_1\) , equation \eqref{Khachatryan1.1} has been studied in works \cite{arb7}-\cite{khach9}. When \(\omega_2 \equiv 0\) , issues of existence, uniqueness, and some qualitative properties of the constructed solution to equation \eqref{Khachatryan1.2} have been studied in detail in \cite{vla3}, \cite{juko10}-\cite{petr13}. It should also be noted that equation \eqref{Khachatryan1.2} was examined in \cite{khach14} in the case where \(\omega_2(t, u) = \mu_2(t)u\), due to its significant applied value in the theory of \(p\)-adic strings.
It is also noteworthy that equations related to \eqref{Khachatryan1.2} on the entire real line with difference kernels and \(\omega_2 \equiv 0\)  have been studied in works \cite{vla3} and \cite{khach9}-\cite{khach17}.

\subsection{Summary of Main Results.}

The first part of this work is devoted to the study and solution of the quasilinear equation \eqref{Khachatryan1.1}. A theorem is proved on the existence of a one-parameter family of non-negative, nontrivial, continuous on $\mathbb{R}^+,$ and linearly growing solutions of equation \eqref{Khachatryan1.1}. The set of corresponding parameters is described, and an explicit asymptotic formula for the constructed solutions at infinity is established.

The second part focuses on the nonlinear integral equation \eqref{Khachatryan1.2}. First, a constructive theorem is proven regarding the existence of a non-negative, nontrivial, bounded, and continuous solution on $\mathbb{R}^+.$ Then, the uniform convergence of the introduced sequence of successive approximations to the solution is shown, with a rate following an infinitely decreasing geometric progression.

The third part addresses the uniqueness of the solution to equation \eqref{Khachatryan1.2}. In a certain subclass of non-negative and bounded functions on $\mathbb{R}^+,$  the uniqueness of the constructed solution is proved. At the end of the work, specific examples of the kernel $K$  and the nonlinearities $Q, \omega_j, j=1,2$ , are presented to illustrate the importance of the obtained results.

\section{Solvability of Equations  \eqref{Khachatryan1.1} and \eqref{Khachatryan1.2}}

\subsection{One-Parameter Family of Solutions to Equation   \eqref{Khachatryan1.1}.}
\setcounter{equation}{0}
We begin with the following result:

\begin{theorem} Under conditions  $1)-3)$, $a_1)$, and $a_2)$, the quasilinear integral equation \eqref{Khachatryan1.1} has a one-parameter family of non-negative, continuous, and linearly increasing solutions on $\mathbb{R}^+.$ Moreover, for each such solution $\{f_\gamma(x)\}_{\gamma \in (0, +\infty)},$ the following asymptotic formula holds:
\begin{equation}\label{Khachatryan2.1}
\lim_{x \to +\infty} \frac{f_\gamma(x)}{x} = \gamma.
\end{equation}
\end{theorem}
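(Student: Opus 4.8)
The plan is to treat \eqref{Khachatryan1.1} as a compact, bounded perturbation of its linear homogeneous part and to build each $f_\gamma$ by monotone successive approximations anchored at the pure linear profile $\gamma x$. The starting observation, which I would isolate as a preliminary computation, is that the linear function is an \emph{exact} solution of the homogeneous equation: writing $\kappa(x,t):=K(x-t)-K(x+t)$ and substituting $s=x-t$, $s=x+t$, conditions $1)$–$3)$ give
\begin{equation*}
\int_0^\infty \kappa(x,t)\,t\,dt = x\int_{-\infty}^{\infty}K(s)\,ds-\int_{-\infty}^{\infty}sK(s)\,ds=x,
\end{equation*}
because $K$ is even (so the odd integrand $sK(s)$ integrates to zero) and $\int_{-\infty}^{\infty}K=1$, the finiteness of the first moment in $3)$ ensuring absolute convergence. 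The same substitution yields $\int_0^\infty\kappa(x,t)\,dt=1-2\int_x^\infty K(s)\,ds$, which I will use repeatedly. Note also $\kappa(x,t)\ge 0$ on $\mathbb R^+\times\mathbb R^+$, since $|x-t|\le x+t$ and $K$ is even and decreasing by $2)$.

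Fix $\gamma>0$ and define $f^{(0)}_\gamma(x)=\gamma x$ and
\begin{equation*}
f^{(n+1)}_\gamma(x)=\int_0^\infty \kappa(x,t)\big(f^{(n)}_\gamma(t)+\omega_1(t,f^{(n)}_\gamma(t))\big)\,dt .
\end{equation*}
Since $\kappa\ge0$ and $\omega_1(t,\cdot)$ is nondecreasing with $\omega_1(t,u)\ge\omega_1(t,0)=0$, this map is monotone, and the preliminary identity gives $f^{(1)}_\gamma(x)=\gamma x+\int_0^\infty\kappa(x,t)\,\omega_1(t,\gamma t)\,dt\ge \gamma x=f^{(0)}_\gamma(x)$. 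By induction $\{f^{(n)}_\gamma\}$ is pointwise nondecreasing and bounded below by $\gamma x$. The decisive step is the uniform upper estimate. Using $\omega_1(t,u)\le\mu_1(t)$ from $a_2)$ together with the two identities above, I would seek a majorant $\gamma x+\psi_\gamma(x)$ with $\psi_\gamma\ge0$ and $\psi_\gamma(x)=o(x)$, chosen as a supersolution of the linearized problem:
\begin{equation*}
\psi_\gamma(x)\ge \int_0^\infty\kappa(x,t)\,\psi_\gamma(t)\,dt+\int_0^\infty\kappa(x,t)\,\mu_1(t)\,dt .
\end{equation*}
The forcing $\int_0^\infty\kappa(x,t)\mu_1(t)\,dt$ is nonnegative, tends to $0$ at infinity, and is integrable on $\mathbb R^+$ (its integral is at most $\|\mu_1\|_{L^1}$, finite by $a_2)$). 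Because the operator is conservative, the defect $1-\int_0^\infty\kappa(x,t)\,dt=2\int_x^\infty K$ decays like the tail of $K$, and it is precisely the first-moment hypotheses $\int_0^\infty tK(t)\,dt<\infty$ and $\int_0^\infty t\mu_1(t)\,dt<\infty$ — rather than any second-moment bound — that force the forced correction to be sublinear; constructing $\psi_\gamma$ explicitly and verifying $\psi_\gamma(x)=o(x)$ is the main obstacle.

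With the two-sided bound $\gamma x\le f^{(n)}_\gamma(x)\le \gamma x+\psi_\gamma(x)$ in hand, the monotone sequence converges pointwise to a limit $f_\gamma$ obeying the same bounds. Passing to the limit under the integral is justified by monotone (or dominated) convergence against the integrable majorant, so $f_\gamma$ solves \eqref{Khachatryan1.1}; continuity of $f_\gamma$ follows from the integral representation together with the continuity of $K$ (condition $1)$) and of $\omega_1$ (condition $a_1)$) via dominated convergence in $x$. Finally, dividing the bound by $x$ and letting $x\to+\infty$ gives
\begin{equation*}
\gamma\le\liminf_{x\to+\infty}\frac{f_\gamma(x)}{x}\le\limsup_{x\to+\infty}\frac{f_\gamma(x)}{x}\le\gamma+\lim_{x\to+\infty}\frac{\psi_\gamma(x)}{x}=\gamma,
\end{equation*}
which is exactly \eqref{Khachatryan2.1}; since the asymptotic slope distinguishes the solutions, $\{f_\gamma\}_{\gamma\in(0,+\infty)}$ is a genuine one-parameter family, and nontriviality and positivity are immediate from $f_\gamma(x)\ge\gamma x>0$ for $x>0$.
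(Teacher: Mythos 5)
Your skeleton is the same as the paper's: the identity $\int_0^\infty\bigl(K(x-t)-K(x+t)\bigr)t\,dt=x$, monotone iterations started at $f_0^\gamma(x)=\gamma x$, a two-sided bound $\gamma x\le f_n^\gamma(x)\le\gamma x+\psi(x)$, passage to the limit by B.~Levi's theorem, and division by $x$ to obtain \eqref{Khachatryan2.1}. However, your write-up leaves unresolved exactly the step that carries the whole weight of the theorem: you concede that ``constructing $\psi_\gamma$ explicitly and verifying $\psi_\gamma(x)=o(x)$ is the main obstacle,'' and no construction is given. This is a genuine gap, not a routine detail. The operator $(\hat K\psi)(x):=\int_0^\infty\bigl(K(x-t)-K(x+t)\bigr)\psi(t)\,dt$ is conservative in the sense that $\int_{-\infty}^{\infty}K(s)\,ds=1$, so the Neumann series $\sum_{j\ge0}\hat K^{\,j}g$ for the forced problem $\psi=g+\hat K\psi$ (with $g:=\hat K\mu_1$) has no a priori reason to converge to a bounded, or even sublinear, function; controlling it is precisely where the first-moment hypotheses $\int_0^\infty xK(x)\,dx<+\infty$ and $\int_0^\infty t\mu_1(t)\,dt<+\infty$ must enter, as you yourself observe. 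Without the majorant, the monotone sequence $f_n^\gamma$ could a priori diverge or acquire superlinear growth in the limit, and both the solvability claim and the asymptotics \eqref{Khachatryan2.1} would remain unproved.

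The paper fills this hole with an auxiliary-equation argument that you would need to reproduce or replace. It first verifies, via Fubini's theorem and conditions $1)$--$3)$, $a_2)$, that $g(x)=\int_0^\infty\bigl(K(x-t)-K(x+t)\bigr)\mu_1(t)\,dt$ satisfies $g>0$ on $(0,+\infty)$, $g(0)=0$, $g\in C_M(\mathbb{R}^+)\cap L_1(\mathbb{R}^+)$, and $xg(x)\in L_1(\mathbb{R}^+)$ (properties \eqref{Khachatryan2.4}). It then solves $\psi=g+\hat K\psi$ (the auxiliary equation \eqref{Khachatryan2.3}) by monotone iterations $\psi_0=g$, $\psi_{n+1}=g+\hat K\psi_n$, and bounds every iterate from above by the continuous, positive, \emph{bounded} solution $H$ of the conservative Wiener--Hopf equation $H(x)=g(x)+\int_0^\infty K(x-t)H(t)\,dt$; the existence of such an $H$ for a summable free term with finite first moment is a nontrivial known result quoted from the literature on convolution equations (reference \cite{arb19}). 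This gives $g\le\psi\le H$, so $\psi$ is bounded --- stronger than the $o(x)$ you asked for --- and independent of $\gamma$, after which the induction $f_n^\gamma(x)\le\gamma x+\psi(x)$ closes exactly as you outlined. Everything else in your proposal (monotonicity, B.~Levi, continuity, the limit computation, and distinguishing the solutions by their asymptotic slopes --- where the paper instead establishes $f_{\gamma_1}(x)-f_{\gamma_2}(x)\ge(\gamma_1-\gamma_2)x$) matches the paper and is sound.
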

\begin{proof} First, note that from conditions  $1)$, $2)$  it immediately follows that
\begin{equation}\label{Khachatryan2.2}
K(x - t) > K(x + t), \quad \text{at } (x, t) \in (0, +\infty) \times (0, +\infty).
\end{equation}

Now, along with equation  \eqref{Khachatryan1.1} we consider the following auxiliary linear integral equation with a sum-difference kernel:
\begin{equation}\label{Khachatryan2.3}
\psi(x) = g(x) + \int\limits_0^\infty (K(x - t) - K(x + t)) \psi(t) \, dt, \quad x \in \mathbb{R}^+,
\end{equation}
with respect to the desired non-negative and continuous on  $\mathbb{R}^+$ function $\psi(x)$. In equation  \eqref{Khachatryan2.3} the free term  $g(x)$ admits the following representation:
\[
g(x) = \int\limits_0^\infty \left(K(x - t) - K(x + t)\right) \mu_1(t) \, dt, \quad x \in \mathbb{R}^+.
\]

Taking into account inequality  \eqref{Khachatryan2.2}, conditions $1)-3)$, $a_2)$ and using Fubini's theorem (see  \cite{kol18}), it is easy to verify the following facts for the free term  $g:$
\begin{equation}\label{Khachatryan2.4}
g(x) > 0, \quad x \in (0, +\infty), \quad g(0) = 0, \quad g \in C_M(\mathbb{R}^+) \cap L_1(\mathbb{R}^+), \quad x g(x) \in L_1(\mathbb{R}^+),
\end{equation}
where $C_M(\mathbb{R}^+)$ is the space of continuous and bounded functions on  $\mathbb{R}^+.$

Let us now consider the following iterations for equation \eqref{Khachatryan2.3}:
\begin{equation}\label{Khachatryan2.5}
\begin{array}{c}
\displaystyle\psi_{n+1}(x) = g(x) + \int\limits_0^\infty \left(K(x - t) - K(x + t)\right) \psi_n(t) \, dt, \\
\displaystyle \psi_0(x) = g(x), \, n = 0, 1, 2, \ldots, \quad x \in \mathbb{R}^+.
 \end{array}
\end{equation}

Using conditions  $1)$–$3)$ and properties  \eqref{Khachatryan2.4} of the function  $g$, it is easy to check that the following properties of the sequence $\{\psi_n(x)\}_{n=0}^\infty$ are satisfied:
\begin{equation}\label{Khachatryan2.6}
\psi_n\in C(\mathbb{R}^+), \quad \psi_{n+1}(x)\geq \psi_n(x),\quad n=0,1,2,\ldots, \quad x\in \mathbb{R}^+,
\end{equation}
\begin{equation}\label{Khachatryan2.7}
\psi_n(x) \leq H(x), \quad n = 0, 1, 2, \ldots, \; x \in \mathbb{R}^+,
\end{equation}
where $ H(x) $ — is a continuous positive and bounded on  $ \mathbb{R}^+ $ solution of the following
Wiener-Hopf integral equation (see \cite{arb19}):
\begin{equation}\label{Khachatryan2.8}
H(x) = g(x) + \int\limits_0^\infty K(x-t) H(t) dt, \quad x \in \mathbb{R}^+.
\end{equation}

From \eqref{Khachatryan2.6} and \eqref{Khachatryan2.7} it immediately follows that the sequence of continuous functions  $ \{\psi_n(x)\}_{n=0}^\infty : \lim\limits_{n \to \infty} \psi_n(x) = \psi(x),$
converges pointwise, and the limit function $ \psi(x) $, according to B. Levy's theorem (see  \cite{kol18}), satisfies equation  \eqref{Khachatryan2.3}.
We also note that from  \eqref{Khachatryan2.6}, \eqref{Khachatryan2.7} and the above-listed properties of the solution   $H(x)$ of equation  \eqref{Khachatryan2.8} it follows that the limit function  $ \psi(x) $  is bounded, and the following two-sided inequality holds:
\begin{equation}\label{Khachatryan2.9}
g(x) \leq \psi(x) \leq H(x), \quad x \in \mathbb{R}^+.
\end{equation}

We now proceed to the study of the quasilinear equation  \eqref{Khachatryan1.1} and introduce the following special successive approximations for any $\gamma \in (0, +\infty)$:
\begin{equation}\label{Khachatryan2.10}
\begin{array}{c}
\displaystyle f_{n+1}^\gamma(x) = \int\limits_0^\infty \big(K(x-t) - K(x+t)\big) \big(f_n^\gamma(t) + \omega_1(t, f_n^\gamma(t))\big) dt,\\
\displaystyle f_0^\gamma(x) = \gamma x, \quad n = 0, 1, 2, \ldots, \; x \in \mathbb{R}^+.
\end{array}
\end{equation}

Using the continuity of the functions  $ K $ and $\omega_1$ we can easily verify by induction on $n$ that
\begin{equation}\label{Khachatryan2.11}
f_n^\gamma \in C(\mathbb{R}^+), \quad n = 0, 1, 2, \ldots, \; \gamma \in (0, +\infty).
\end{equation}

We now prove that
\begin{equation}\label{Khachatryan2.12}
f_{n+1}^\gamma(x) \geq f_n^\gamma(x), \quad n = 0, 1, 2, \ldots, \; x \in \mathbb{R}^+, \, \gamma \in (0, +\infty).
\end{equation}

First, we show the validity of the inequality  $ f_1^\gamma(x) \geq f_0^\gamma(x) $, $ x \in \mathbb{R}^+ $, $\gamma\in(0,+\infty).$ Indeed, using conditions  $1), 2), 3)$ and $a_1),$ as well as inequality  \eqref{Khachatryan2.2}, from \eqref{Khachatryan2.10} we will have
\[f_1^\gamma (x)\geq \gamma  \int\limits_0^\infty \big(K(x-t) - K(x+t)\big)tdt=\gamma \int\limits_{-\infty}^x K(y)(y-x)dy-\gamma \int\limits_x^\infty K(y)(y-x)dy=\]
\[=\gamma x \int\limits_{-\infty}^x K(y)dy- \gamma  \int\limits_{-\infty}^x K(y)ydy- \gamma  \int\limits_{x}^\infty K(y)ydy+\gamma x \int\limits_{x}^\infty K(y)dy=\]
\[= \gamma x - \int\limits_{-\infty}^\infty y K(y)dy=\gamma x=f_0^\gamma (x).\]
Assuming that  $ f_n^\gamma(x) \geq f_{n-1}^\gamma(x), \; x \in \mathbb{R}^+, \; \gamma \in (0,+ \infty) $ for some natural  $ n $ and again using condition  $a_1)$ from \eqref{Khachatryan2.10} by virtue of inequality \eqref{Khachatryan2.2}, we obtain that $ f_{n+1}^\gamma(x)\geq f_n^\gamma(x), \; x \in \mathbb{R}^+, \; \gamma \in (0, +\infty) $.

Let us prove the validity of the following inequality from above:
\begin{equation}\label{Khachatryan2.13}
f_{n}^\gamma(x) \leq \gamma x + \psi(x), \; x \in \mathbb{R}^+, \; \gamma \in (0, +\infty).
\end{equation}
In the case $ n = 0 $, inequality \eqref{Khachatryan2.13} follows directly from the definition of the zeroth approximation in the iterative scheme \eqref{Khachatryan2.10}, due to relations \eqref{Khachatryan2.4} and \eqref{Khachatryan2.9}.
Assume now that inequality \eqref{Khachatryan2.13} holds for some natural number $ n $  and for all $ x \in \mathbb{R}^+ $  and $ \gamma \in (0, +\infty) $. Then, using conditions 1)–3), $a_2)$, and inequality \eqref{Khachatryan2.2}, we obtain:
\[
f_{n+1}^\gamma(x) \leq \int\limits_0^\infty \big(K(x-t) - K(x+t)\big) \big(\gamma t + \psi(t) + \omega_1(t, \gamma t + \psi(t))\big) dt \leq
\]
\[
\leq \gamma x + \int\limits_0^\infty \big(K(x-t) - K(x+t)\big) \psi(t) dt + g(x) = \gamma x + \psi(x), \; x \in \mathbb{R}^+, \; \gamma \in (0,+\infty).
\]

Using similar reasoning and taking into account the monotonicity of the function  $ \omega_1(t, u) $ with respect to $ u $, it is also easy to verify by induction on  $n$ that the following inequality from below holds for all  $ \gamma_1, \gamma_2 \in (0, +\infty), \; \gamma_1 > \gamma_2 $:
\begin{equation}\label{Khachatryan2.14}
f_n^{\gamma_1}(x) - f_n^{\gamma_2}(x)\geq (\gamma_1-\gamma_2)x, \; n = 0, 1, \ldots, \; x \in \mathbb{R}^+.
\end{equation}

Thus, based on  \eqref{Khachatryan2.11}- \eqref{Khachatryan2.14} we conclude that for any  $ \gamma \in (0,+\infty) $ the sequence of continuous  on  $ \mathbb{R}^+ $  functions $\{f_n^\gamma(x)\}_{n=0}^\infty $  has a pointwise limit when $ n \to \infty :
\lim\limits_{n \to \infty} f_n^\gamma(x) = f_\gamma(x), \; x \in \mathbb{R}^+.
$ Taking into consideration the continuity of the functions  $ K $ and $ \omega_1 $, inequality \eqref{Khachatryan2.2}, and also \eqref{Khachatryan2.11}–\eqref{Khachatryan2.13}, according to B. Levy's theorem, we can assert that the limit function satisfies equation \eqref{Khachatryan1.1}. From \eqref{Khachatryan2.11}–\eqref{Khachatryan2.14} it follows that
\begin{equation}\label{Khachatryan2.15}
\gamma x \leq f_\gamma(x) \leq \gamma x + \psi(x), \; x \in \mathbb{R}^+, \; \gamma \in (0, +\infty),
\end{equation}
\begin{equation}\label{Khachatryan2.16}
f_{\gamma_1}(x)-f_{\gamma_2}(x)\geq (\gamma_1-\gamma_2)x,\quad \text{at}\quad \gamma_1>\gamma_2>0,\quad x\in\mathbb{R}^+.
\end{equation}
From the continuity of the function  $ K $ taking into account conditions  3) it follows that
\[
f_\gamma \in C(\mathbb{R}^+), \; \gamma \in (0, +\infty).
\]

Since $ \psi \in M(\mathbb{R}^+),$   the limit relation  \eqref{Khachatryan2.1} immediately follows from  \eqref{Khachatryan2.15}. Thus, the theorem is proved.
\end{proof}
\begin{Remark}
In the process of proving Theorem 1, additional properties \eqref{Khachatryan2.15} and \eqref{Khachatryan2.16} of the constructed solutions were also established.
\end{Remark}

\subsection{Constructive Solvability of Equation   \eqref{Khachatryan1.2}.}

Now we turn to the analysis of equation \eqref{Khachatryan1.2}. Alongside this equation, we also consider the following auxiliary nonlinear integral equations:
\begin{equation}\label{Khachatryan2.17}
Q(F(x)) = \int\limits_0^\infty \big(K(x-t) - K(x+t)\big) F(t) dt, \; x \in \mathbb{R}^+,
\end{equation}
\begin{equation}\label{Khachatryan2.18}
Q(\Phi(x)) = \int\limits_0^\infty \big(K(x-t) - K(x+t)\big) \big(1 + \mu_2(t)\big) \Phi(t) dt, \; x \in \mathbb{R}^+,
\end{equation}
with respect to the sought nonnegative bounded and continuous on  $ \mathbb{R}^+ $ functions $ F $ and $ \Phi $ respectively. It follows from the results of  \cite{khach12} that equation  \eqref{Khachatryan2.17} has a unique non-negative non-trivial monotonically non-decreasing continuous and bounded on  $ \mathbb{R}^+ $ solution  $ F(x) $, and
\begin{equation}\label{Khachatryan2.19}
F(0) = 0, \; F(x) > 0 \; \text{at} \; x > 0, \; \lim_{x \to +\infty} F(x) = \eta, \; \eta - F \in L_1(\mathbb{R}^+),
\end{equation}
where the number  $ \eta > 0 $ is a positive fixed point of the mapping  $ y = Q(u) $. The existence of such a point follows immediately from properties  I), II) of the function $ Q(u) $. According to the results of \cite{khach14}, equation  \eqref{Khachatryan2.18} has a unique non-negative non-trivial continuous and bounded on $ \mathbb{R}^+ $ solution $ \Phi(x) $, where
\begin{equation}\label{Khachatryan2.20}
\Phi(0) = 0, \; \xi>\Phi(x) > F(x) \; \text{at} \; x > 0, \; \lim_{x \to +\infty} \Phi(x) = \eta, \; \eta- \Phi \in L_1(\mathbb{R}^+).
\end{equation}

Let us now consider the following iterations for equation \eqref{Khachatryan2.1}:
\begin{equation}\label{Khachatryan2.21}
\begin{array}{c}
\displaystyle Q(B_{n+1}(x)) = \int\limits_0^\infty \big(K(x-t) - K(x+t)\big) \big(B_n(t) + \omega_2(t, B_n(t))\big) dt, \\
\displaystyle B_0(x) = \Phi(x), \; n = 0, 1, 2, \ldots, \; x \in \mathbb{R}^+.
\end{array}
\end{equation}

Using conditions 1)–3), I), II), $b_1)$ and $b_2)$, by induction on  $ n $ it is easy to verify the following facts:
\begin{equation}\label{Khachatryan2.22}
B_n \in C(\mathbb{R}^+), \; n = 0, 1, 2, \ldots,
\end{equation}
\begin{equation}\label{Khachatryan2.23}
B_{n+1}(x) \leq B_n(x), \; n = 0, 1, 2, \ldots, \; x \in \mathbb{R}^+.
\end{equation}
\begin{equation}\label{Khachatryan2.24}
B_n(x)\geq F(x), \; n = 0, 1, 2, \ldots, \; x \in \mathbb{R}^+.
\end{equation}
From \eqref{Khachatryan2.22}, \eqref{Khachatryan2.23} and \eqref{Khachatryan2.24}, the pointwise convergence of the successive approximations in \eqref{Khachatryan2.21} follows. In fact, by additionally using condition III), we will establish the uniform convergence of the sequence of continuous functions $\{B_n(x)\}_{n=0}^\infty$ to a continuous solution $B(x)$ of equation \eqref{Khachatryan1.2} on $\mathbb{R}^+$  with a convergence rate governed by a decreasing geometric progression.

To this end, we introduce the following function on the set $(0, +\infty)$:
\begin{equation}\label{Khachatryan2.25}
\chi(x) := \frac{\int\limits_{0}^\infty \big(K(x-t) - K(x+t)\big) \big(\Phi(t) + \omega_2(t, \Phi(t))\big) dt}
{\int\limits_{0}^\infty K(x-t) - K(x+t) \big(1 + \mu_2(t) \Phi(t)\big) dt}.
\end{equation}
Since the convolution of a bounded and integrable function is continuous (see \cite{rud20}), it follows from properties $1), 2), b_1), $ and $b_2)$ that:
\[
\chi \in C(0, +\infty).
\]
Below we use the following well-known limit relation in the convolution operation
(see \cite{gev21}):
\begin{enumerate}
  \item[i)]  if $W_1(x) \geq 0$, $x\in\mathbb{R},$ $W_2(t)\geq0,$ $t\in\mathbb{R}^+,$ $W_1 \in L_1(\mathbb{R})$, $W_2 \in L_1(\mathbb{R}^+)$, $\lim\limits_{x\rightarrow+\infty}W_2(x)=0$, then
\begin{equation*}\label{Khachatryan2.26}
\lim_{x \to +\infty} \int\limits_{0}^\infty W_1(x-t) W_2(t) dt =0,
\end{equation*}
  \item[j)] if $W_1(x) \geq 0$, $x\in\mathbb{R},$ $W_2(t)\geq0,$ $t\in\mathbb{R}^+,$  $W_1 \in L_1(\mathbb{R})$, $W_2 \in L_\infty(\mathbb{R}^+)$, $\lim\limits_{x\rightarrow+\infty}W_2(x) < +\infty$,  then
$$
\lim_{x \to +\infty} \int\limits_{0}^\infty W_1(x-t) W_2(t) dt = \int\limits_{-\infty}^\infty W_1(\tau) d\tau \cdot \lim_{x \to +\infty} W_2(x).
$$
\end{enumerate}

Taking into account  \eqref{Khachatryan2.20}, $i), j)$, and conditions  $1), 2), b_1)$ and  $b_2)$ from \eqref{Khachatryan2.25} we have:

\begin{equation}\label{Khachatryan2.27}
\begin{array}{c}
\displaystyle\chi(+\infty) =
\frac{\lim\limits_{x \to +\infty} \int\limits_{0}^\infty \big(K(x-t) - K(x+t)\big)\big(\Phi(t) + \omega_2(t, \Phi(t))\big) dt}
{\lim\limits_{x \to +\infty} \int\limits_{0}^\infty \big(K(x-t) - K(x+t)\big)\big(1 + \mu_2(t) \Phi(t)\big) dt}=\\
\displaystyle =
\frac{\int\limits_{-\infty}^\infty K(y) dy \cdot \lim\limits_{t \to +\infty} \big(\Phi(t) + \omega_2(t, \Phi(t))\big)}
{\int\limits_{-\infty}^\infty K(y) dy \cdot \lim\limits_{t \to +\infty} \big(\Phi(t) + \mu_2(t) \Phi(t)\big)} =
\frac{\eta + \lim\limits_{t \to +\infty} \omega_2(t, \Phi(t))}
{\eta + \lim\limits_{t \to +\infty} \big(\mu_2(t) \Phi(t)\big)} = 1.
\end{array}
\end{equation}

From $\eqref{Khachatryan2.2}, \eqref{Khachatryan2.20}$ and $b_1)$ it immediately follows that

\begin{equation}\label{Khachatryan2.28}
\chi(x) > 0 \quad \text{ at }\quad x > 0.
\end{equation}
Now we investigate the limit value of the function  $\chi(x)$, when  $x \to 0^+$. Using conditions  $1), 2), b_1)$ and $b_2)$, according to L'Hôpital's rule from  \eqref{Khachatryan2.25}, we obtain the following expressions for  $\chi(+0):=\lim\limits_{x\rightarrow0^+}\chi(x):$

\[
\chi(+0) =  \frac{\lim\limits_{x \to 0^+}\int\limits_{0}^\infty \big(K'_x(x-t) - K'_x(x+t)\big) (\Phi(t) + \omega_2(t, \Phi(t))) dt}
{\lim\limits_{x \to 0^+}\int\limits_{0}^\infty \big(K'_x(x-t) - K'_x(x+t)\big) (1 + \mu_2(t)\Phi(t)) dt} =
\]

\[
= \frac{\int\limits_{0}^\infty (K'(-t)-K'(t)) (\Phi(t) + \omega_2(t, \Phi(t))) dt}
{\int\limits_{0}^\infty (K'(-t)-K'(t)) (1 + \mu_2(t)\Phi(t)) dt} =
\]

\[
= \frac{-2 \int\limits_{0}^\infty K'(t) (\Phi(t) + \omega_2(t, \Phi(t))) dt}
{-2 \int\limits_{0}^\infty K'(t) (1 + \mu_2(t)\Phi(t)) dt}
\geq
\frac{-\int\limits_{0}^\infty K'(t) (F(t) + \omega_2(t, F(t))) dt}
{-\xi \int\limits_{0}^\infty K'(t) (1 + \mu_2(t)) dt} =: \varepsilon > 0,
\]

since $K'(t) < 0$, $t \in \mathbb{R}^+$, $F(0) = 0$, $F(t) > 0$, $t \in (0, +\infty)$, $\mu_2(t) \geq 0$, $t \in \mathbb{R}^+$.

On the other hand, since
\[
0 < F(t) + \omega_2(t, F(t)) \leq \eta + \mu_2(t)\eta< \xi(1 + \mu_2(t)), \quad t > 0,
\]

then $\varepsilon < 1$. It is also obvious that  $\chi(+0) \leq 1$, since  $\omega_2(t, \Phi(t)) \leq \mu_2(t)\Phi(t),\, t \in \mathbb{R}^+$.

From the above properties of the function  $\chi$ it follows that
$
\chi \in C(\mathbb{R}^+)$ and there exists a number   $ \sigma_0 \in (0, 1)$  such that:

\begin{equation}\label{Khachatryan2.29}
\sigma_0 \leq \chi(x) \leq 1, \quad x \in \mathbb{R}^+.
\end{equation}

Taking into consideration  \eqref{Khachatryan2.25}, \eqref{Khachatryan2.29} from the recurrence relations  \eqref{Khachatryan2.21} we obtain that
\begin{equation}\label{Khachatryan2.30}
\sigma_0 Q(B_0(x)) \leq Q(B_1(x)) \leq Q(B_0(x)), \quad x \in \mathbb{R}^+.
\end{equation}

Now we note that for each fixed $t \in \mathbb{R}^+$ the following estimation  holds:
\begin{equation}\label{Khachatryan2.31}
\omega_2(t, \delta u) \geq \delta \omega_2(t, u), \quad \delta \in (0, 1), \quad u \in \mathbb{R}^+.
\end{equation}

Indeed, for  $u = 0$ inequality  \eqref{Khachatryan2.31} turns into equality, because $\omega_2(t, 0) \equiv 0$ on $\mathbb{R}^+$.
Since  $\dfrac{\omega_2(t, u)}{u}$ monotonically decreases in  $u $ on $ (0, +\infty)$ (the latter follows from condition  $b_1$) and $0<\delta u<u,$ then
$\dfrac{\omega_2(t, \delta u)}{\delta u}> \dfrac{\omega_2(t, u)}{u}, \text{ whence we arrive at  \eqref{Khachatryan2.31}.}$

Now, using condition  $III)$ and the monotonicity of the function  $G$, it follows from  \eqref{Khachatryan2.30} that:
\begin{equation}\label{Khachatryan2.32}
\varphi(\sigma_0)B_0(t) \leq G(\sigma_0Q(B_0(t))) \leq B_1(t)\leq B_0(t), \quad t \in \mathbb{R}^+.
\end{equation}
Since $ \varphi(\sigma_0) \in (0, 1),$ then taking into account inequality \eqref{Khachatryan2.31} from estimate  \eqref{Khachatryan2.32} we arrive at a two-sided inequality:
\begin{equation}\label{Khachatryan2.33}
\begin{array}{c}
\displaystyle\varphi(\sigma_0)(B_0(t)+\omega_2(t, B_0(t)))\leq \varphi(\sigma_0) B_0(t) + \omega_2(t, \varphi(\sigma_0) B_0(t))\leq\\
\displaystyle\leq B_1(t) + \omega_2(t, B_1(t)) \leq B_0(t) + \omega_2(t, B_0(t)), \quad t \in \mathbb{R}^+.
\end{array}
\end{equation}
Taking into consideration inequality  \eqref{Khachatryan2.2} and recurrence relations \eqref{Khachatryan2.21} from \eqref{Khachatryan2.33} we obtain that
\[
\varphi(\sigma_0) Q(B_1(x)) \leq Q(B_2(x)) \leq Q(B_1(x)), \quad x \in \mathbb{R}^+,
\]
whence, by virtue of condition $III)$ and estimate  \eqref{Khachatryan2.31} it follows that
\begin{equation}\label{Khachatryan2.34}
\varphi(\varphi(\sigma_0)) (B_1(t) + \omega_2(t, B_1(t))) \leq B_2(t) + \omega_2(t, B_2(t)) \leq B_1(t) + \omega_2(t, B_1(t)), \quad t \in \mathbb{R}^+.
\end{equation}
Again, using  \eqref{Khachatryan2.2} from \eqref{Khachatryan2.34} and \eqref{Khachatryan2.21}, we arrive at the inequality:
\[
\varphi(\varphi(\sigma_0)) Q(B_2(x)) \leq Q(B_3(x)) \leq Q(B_2(x)), \quad x \in \mathbb{R}^+,
\]
whence, by virtue of $III)$ we obtain
\[
\varphi(\varphi(\varphi(\sigma_0))) B_2(x) \leq B_3(x) \leq B_2(x), \quad x \in \mathbb{R}^+.
\]
Continuing this procedure at the  $n$-th step, we arrive at the double inequality:
\begin{equation}\label{Khachatryan2.35}
\underbrace{\varphi(\varphi\ldots \varphi(\sigma_0))}_{n+1} B_n(x) \leq B_{n+1}(x) \leq B_n(x), \quad x \in \mathbb{R}^+.
\end{equation}

Let $\varepsilon_0 \in (0, 1)$ be an arbitrary number. Then, according to inequality \eqref{Khachatryan3.16} from \cite{khach22} the following estimate is valid:

\begin{equation}\label{Khachatryan2.36}
1 - \underbrace{\varphi(\varphi\ldots \varphi(\sigma_0))}_{n+1} \leq (1 - \sigma_0) k^{n+1}, \quad n = 0, 1, 2, \ldots,
\end{equation}
where
\begin{equation}\label{Khachatryan2.37}
k := k_{\varepsilon_0} = \frac{1 - \varphi(\varepsilon_0\sigma_0)}{1 - \varepsilon_0\sigma_0} \in (0, 1).
\end{equation}
Taking into account  \eqref{Khachatryan2.36}, \eqref{Khachatryan2.23}, \eqref{Khachatryan2.20}, it follows from  \eqref{Khachatryan2.35} that
\begin{equation}\label{Khachatryan2.38}
0 \leq B_n(x) - B_{n+1}(x) \leq \xi (1 - \sigma_0) k^{n+1}, \quad n = 0, 1, 2, \ldots, \quad x \in \mathbb{R}^+.
\end{equation}

Writing inequalities  \eqref{Khachatryan2.38} for indices  $n+1, n+2, \ldots, n+p-1$, then adding the obtained inequalities and inequality \eqref{Khachatryan2.38}, by virtue of  \eqref{Khachatryan2.37} we obtain:
\begin{equation}\label{Khachatryan2.39}
0 \leq B_n(x) - B_{n+p}(x) \leq \xi (1 - \sigma_0) k^{n+1}(1 + k + \ldots + k^p) \leq \frac{\xi (1 - \sigma_0) k^{n+1}}{1 - k},
\end{equation}
$n, p = 0, 1, 2, \ldots, x \in \mathbb{R}^+.$

Fixing the index $n$  in \eqref{Khachatryan2.39} and passing $p$ to infinity, we obtain the following estimate:
\begin{equation}\label{Khachatryan2.40}
0 \leq B_n(x) - B(x) \leq C \cdot k^{n+1}, \quad n = 0, 1, 2, \ldots, x \in \mathbb{R}^+,
\end{equation}
where
\begin{equation}\label{Khachatryan2.41}
C := \frac{\xi (1 - \sigma_0)}{1 - k}, \quad B(x): = \lim_{n \to \infty} B_n(x).
\end{equation}

From \eqref{Khachatryan2.22} and the uniform convergence of the sequence  $\{B_n(x)\}_{n=0}^\infty$ it follows that
\[
B \in C(\mathbb{R}^+).
\]

Taking into account the continuity of the functions  $K, \omega_2$, as well as  \eqref{Khachatryan2.23}, \eqref{Khachatryan2.24} and \eqref{Khachatryan2.40}, according to B. Levy's theorem we obtain that  $B(x)$ is a solution of equation  \eqref{Khachatryan1.2}. From \eqref{Khachatryan2.23}, \eqref{Khachatryan2.24}, \eqref{Khachatryan2.19} and \eqref{Khachatryan2.20} it also follows that:
\begin{equation}\label{Khachatryan2.42}
F(x) \leq B(x) \leq \Phi(x), \quad x \in \mathbb{R}^+, \quad \lim_{x \to +\infty} B(x) = \eta, \quad \eta - B \in L_1(\mathbb{R}^+).
\end{equation}
Thus, based on the above results, we arrive at the following theorem:
\begin{theorem}  Under conditions 1)--3), $b_1), b_2),$ and $ I)$--$III)$, equation \eqref{Khachatryan1.2} has a non-negative, nontrivial, bounded, and continuous on $\mathbb{R}^+$ solution $B(x)$, which possesses properties \eqref{Khachatryan2.40} and \eqref{Khachatryan2.42}.
\end{theorem}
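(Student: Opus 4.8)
The plan is to realize the solution $B$ as the limit of a monotone-decreasing sequence of successive approximations trapped between the subsolution $F$ of the auxiliary equation \eqref{Khachatryan2.17} and the supersolution $\Phi$ of \eqref{Khachatryan2.18}, and then to upgrade the resulting pointwise convergence to uniform convergence at a geometric rate through a contraction estimate phrased in terms of the concavity map $\varphi$. I would launch the iteration \eqref{Khachatryan2.21} from $B_0=\Phi$, the largest available barrier, precisely so that the monotonicity of $Q$ and of its inverse $G$ can be exploited from the very first step, and so that the denominator appearing below reduces to $Q(\Phi)$.

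First I would verify, by induction on $n$, the three structural properties \eqref{Khachatryan2.22}--\eqref{Khachatryan2.24}. Continuity of each $B_n$ follows from the continuity of $K$, $\omega_2$, $Q$ and $G$ together with the boundedness and integrability furnished by conditions $1)$--$3)$ and $b_2)$; since the convolution of a bounded function with an $L_1$ function is continuous, the right-hand side of \eqref{Khachatryan2.21} is continuous, and $G=Q^{-1}$ transports this to $B_{n+1}$. The monotone decrease $B_{n+1}\le B_n$ is obtained by comparing right-hand sides using positivity of the sum-difference kernel \eqref{Khachatryan2.2}, the monotonicity of $\omega_2(t,\cdot)$, and the bound $\omega_2(t,\Phi)\le\mu_2(t)\Phi$ from $b_2)$ at the base step; the lower bound $B_n\ge F$ follows similarly by comparing against \eqref{Khachatryan2.17} using only $\omega_2\ge0$ and $B_n\ge F$. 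These three facts force the pointwise limit $B(x)=\lim_n B_n(x)$ to exist and to satisfy $F\le B\le\Phi$.

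The crux is to make this convergence quantitative, and the key device is the ratio $\chi$ of \eqref{Khachatryan2.25}, whose denominator is exactly $Q(\Phi)=Q(B_0)$ because $\Phi$ solves \eqref{Khachatryan2.18}, so that $\chi=Q(B_1)/Q(B_0)$ measures how much the first iterate contracts. The main obstacle is to bound $\chi$ uniformly away from $0$ on all of $\mathbb{R}^+$: at infinity one applies the convolution limit laws $i), j)$ with the asymptotics $\Phi(+\infty)=\eta$ and $\mu_2(+\infty)=0$ to get $\chi(+\infty)=1$, while at the origin both integrals vanish, so one must invoke L'H\^opital's rule and differentiate under the integral sign, using that $K$ is even (hence $K'$ odd), that $K'<0$ on $\mathbb{R}^+$, and the strict ordering $F(t)+\omega_2(t,F(t))<\xi(1+\mu_2(t))$ to produce a bound $\chi(+0)\ge\varepsilon>0$. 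Continuity of $\chi$ then yields the two-sided estimate $\sigma_0\le\chi\le1$ of \eqref{Khachatryan2.29}. Combining this with the sub-homogeneity \eqref{Khachatryan2.31} of $\omega_2$ (a consequence of concavity in $b_1)$) and condition $III)$ on $G$, I would propagate the one-step bound \eqref{Khachatryan2.30} inductively into the nested compositions $\underbrace{\varphi(\varphi\cdots\varphi(\sigma_0))}_{n+1}$ of \eqref{Khachatryan2.35}.

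Finally, to extract a geometric rate from these iterated compositions, I would appeal to the elementary estimate from \cite{khach22}, which bounds $1-\underbrace{\varphi(\varphi\cdots\varphi(\sigma_0))}_{n+1}$ by $(1-\sigma_0)k^{n+1}$ with $k\in(0,1)$ as in \eqref{Khachatryan2.37}; together with the uniform bound $B_n\le\Phi<\xi$ this yields the telescoping Cauchy estimate \eqref{Khachatryan2.39} and hence the uniform convergence \eqref{Khachatryan2.40}. Continuity of $B$ then follows from uniform convergence of continuous functions, passage to the limit in \eqref{Khachatryan2.21} via B. Levy's theorem shows $B$ solves \eqref{Khachatryan1.2}, and the trapping $F\le B\le\Phi$ delivers nontriviality along with the asymptotics $B(+\infty)=\eta$ and $\eta-B\in L_1(\mathbb{R}^+)$ recorded in \eqref{Khachatryan2.42}. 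I expect the L'H\^opital computation of $\chi(+0)$ to be the most delicate point, since it requires differentiating the convolutions and carefully tracking the sign and integrability of the resulting kernels.
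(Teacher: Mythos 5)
Your proposal is correct and follows essentially the same route as the paper's own proof: the iteration \eqref{Khachatryan2.21} launched from $B_0=\Phi$, the trapping \eqref{Khachatryan2.22}--\eqref{Khachatryan2.24} between $F$ and $\Phi$, the ratio $\chi=Q(B_1)/Q(B_0)$ analyzed via the convolution limit laws $i)$, $j)$ at infinity and L'H\^opital's rule at the origin, the propagation of \eqref{Khachatryan2.30} through condition $III)$ and the sub-homogeneity \eqref{Khachatryan2.31} into the nested compositions \eqref{Khachatryan2.35}, the geometric-rate bound \eqref{Khachatryan2.36} from \cite{khach22}, and the telescoping/B.~Levy conclusion. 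No substantive differences to report.
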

\begin{Remark} By employing arguments similar to those used in \cite{khach14}, one can also prove that if, in addition, for some $p \in \mathbb{N}$, the integrals
$$
\int\limits_{0}^{\infty} x^p\mu_2(x) dx < +\infty, \quad \int\limits_{0}^{\infty} x^p  K(x) dx < +\infty,$$ are finite, then
$$ \int\limits_{0}^{\infty} x^{p-1} |\eta-B(x)|dx<+\infty.
$$
\end{Remark}
\section{Uniqueness of the Solution to Equation   \eqref{Khachatryan1.2}. Examples}
\subsection{Uniqueness Theorem for the Solution of Equation  \eqref{Khachatryan1.2}.}
\setcounter{equation}{0}
\begin{theorem} Under the assumptions of Theorem 2, equation \eqref{Khachatryan1.2} has only one solution in the class of functions
\begin{equation}\label{Khachatryan3.1}
\mathfrak{M} := \{B \in M(\mathbb{R}^+): B(x) \geq 0, x \in \mathbb{R}^+, \exists r > 0 \text{ s.t}\  \inf\limits_{x>r}B(x) > 0\}.
\end{equation}
\end{theorem}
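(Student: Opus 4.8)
The plan is to show that any solution $\tilde B$ lying in the class $\mathfrak{M}$ of \eqref{Khachatryan3.1} must coincide with the solution $B$ constructed in Theorem 2. Since $B$ itself belongs to $\mathfrak{M}$ (it is nonnegative and bounded, and by \eqref{Khachatryan2.42} tends to $\eta>0$, so $\inf_{x>r}B>0$ for $r$ large), it suffices to compare an arbitrary $\tilde B\in\mathfrak{M}$ with $B$. The engine of the proof is the sub-homogeneity relation \eqref{Khachatryan2.31} for $\omega_2$, condition $III)$ for $G=Q^{-1}$, and the elementary fact that a concave $\varphi$ with $\varphi(0)=0$, $\varphi(1)=1$ satisfies $\varphi(\sigma)>\sigma$ for every $\sigma\in(0,1)$ (strict unless $\varphi$ is the identity, which I treat as a degenerate exclusion).

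The core is a ``largest admissible factor'' argument. Define
\[
\theta_*:=\sup\{\theta\in(0,1]:\ B(x)\ge\theta\,\tilde B(x)\ \text{and}\ \tilde B(x)\ge\theta\,B(x),\ x\in\mathbb{R}^+\}.
\]
Assuming $\theta_*<1$, I substitute $B\ge\theta_*\tilde B$ into \eqref{Khachatryan1.2}: by monotonicity of $\omega_2$ in $u$ (condition $b_1)$) and \eqref{Khachatryan2.31} one has $B(t)+\omega_2(t,B(t))\ge\theta_*\big(\tilde B(t)+\omega_2(t,\tilde B(t))\big)$, and since the kernel is positive by \eqref{Khachatryan2.2} this yields $Q(B(x))\ge\theta_*\,Q(\tilde B(x))$. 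Because $\tilde B\le\Phi<\xi$ forces $Q(\tilde B(x))\in(0,(1+M)\xi)$ (using condition $II)$), condition $III)$ applies and gives $B(x)=G(Q(B(x)))\ge G(\theta_*Q(\tilde B(x)))\ge\varphi(\theta_*)\tilde B(x)$; the symmetric computation gives $\tilde B(x)\ge\varphi(\theta_*)B(x)$. Since $\varphi(\theta_*)>\theta_*$, this contradicts the maximality of $\theta_*$, so $\theta_*=1$, i.e. $B=\tilde B$.

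For the argument to start I must know $\theta_*>0$, i.e. that $B$ and $\tilde B$ are mutually comparable; this is the \emph{main obstacle}. First, testing \eqref{Khachatryan1.2} at $x=0$, where $K(-t)-K(t)=0$, gives $\tilde B(0)=0$, and since $\tilde B\not\equiv0$ the positivity of the kernel for $x,t>0$ forces $\tilde B(x)>0$ on $(0,+\infty)$; hence $B/\tilde B$ is continuous and positive on every compact subinterval of $(0,+\infty)$. Second, I establish the sandwich $F(x)\le\tilde B(x)\le\Phi(x)$ by comparison with the auxiliary equations \eqref{Khachatryan2.17} and \eqref{Khachatryan2.18}: using $\omega_2\ge0$ one shows $\tilde B$ is a supersolution of the $F$-equation, and using $\omega_2(t,u)\le\mu_2(t)u$ (condition $b_2)$) that $\tilde B$ is a subsolution of the $\Phi$-equation, so monotone iterations (started from $0$ and from a constant $\ge\|\tilde B\|_\infty$, respectively) converge, by the uniqueness results of \cite{khach12,khach14}, to $F$ from below and $\Phi$ from above while staying on the correct side of $\tilde B$. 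With \eqref{Khachatryan2.19}, \eqref{Khachatryan2.20} the sandwich immediately gives $\lim_{x\to+\infty}\tilde B=\eta$, so $B/\tilde B\to1$ at infinity.

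The remaining difficulty is the behavior as $x\to0^+$, where $B$, $\tilde B$, $F$, $\Phi$ all vanish. Expanding $K(x-t)-K(x+t)=-2K'(t)x+o(x)$ and using $\int_0^\infty|K'(t)|\,dt=K(0)<\infty$, I obtain $Q(\tilde B(x))\sim c\,x$ with $c=\int_0^\infty(-2K'(t))(\tilde B(t)+\omega_2(t,\tilde B(t)))\,dt\in(0,+\infty)$, and likewise for $B,F,\Phi$ with positive constants; hence each solution behaves like $G(c\,x)$ near the origin, and condition $III)$ (sub-homogeneity of $G$) shows that the ratio of any two such functions stays bounded between positive constants as $x\to0^+$. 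Combining the three regimes (near $0$, on compacts, near $\infty$) yields $0<\inf_{x>0}B(x)/\tilde B(x)$ and $0<\inf_{x>0}\tilde B(x)/B(x)$, so $\theta_*>0$ and the argument above closes. I expect the near-zero comparability to require the most care, since it is precisely where the common vanishing of the solutions must be tamed, and condition $III)$ is exactly what makes this possible.
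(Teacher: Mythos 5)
Your overall architecture --- the maximal comparison factor $\theta_*$, the sub-homogeneity \eqref{Khachatryan2.31} of $\omega_2$, and condition III) applied to $G$ to force $\varphi(\theta_*)>\theta_*$ --- is a legitimate alternative skeleton to the paper's argument (the paper instead proves the one-sided bound $B^*\leq B$, bounds the ratio $\mathcal{L}(x)=Q(B^*(x))/Q(B(x))$ of \eqref{Khachatryan3.9} between $\sigma^*$ and $1$, and then reuses the geometric $\varphi$-iteration of Theorem 2, which is essentially your $\theta_*$ step in sequential form). However, there is a genuine gap where you build the two-sided comparability: the lower sandwich bound $F\leq\tilde B$. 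You propose to obtain it by ``monotone iterations started from $0$'' for equation \eqref{Khachatryan2.17}, but by the criticality condition \eqref{Khachatryan1.3} the zero function is a fixed point of that iteration: if the zeroth approximation is $\equiv 0$, every iterate is $\equiv 0$, so the scheme converges to the trivial solution, not to $F$. This is not a technicality --- producing a positive lower bound for an \emph{arbitrary} solution is exactly the crux of the uniqueness problem, and it is precisely why the class \eqref{Khachatryan3.1} carries the hypothesis $\inf_{x>r}B>0$. Tellingly, your proof never uses that hypothesis; had your sandwich step been correct, you would have proved uniqueness among \emph{all} nonnegative, bounded, nontrivial solutions, a strictly stronger claim than the theorem. (Running the decreasing iteration from $\tilde B$ itself does yield a solution of \eqref{Khachatryan2.17} below $\tilde B$, but then one must show that this limit is nontrivial --- otherwise it may collapse to zero --- which again needs the $\inf$-condition and essentially re-proves \cite{khach12}.)

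The fix is to use the defining property of $\mathfrak{M}$ where the paper does: for $x$ large, $Q(\tilde B(x))\geq \inf_{t>r}\tilde B(t)\bigl(\int_{-\infty}^{x-r}K(y)\,dy-\int_{x+r}^{\infty}K(y)\,dy\bigr)\to\inf_{t>r}\tilde B(t)>0$, which gives $\liminf_{x\to+\infty}\tilde B(x)>0$ and hence comparability of $\tilde B$ and $B$ at infinity without ever mentioning $F$; your near-zero analysis (the analogue of the paper's L'H\^opital computation for $\mathcal{L}(+0)$) and the compact-interval regime can then stand as written. Three smaller debts: (i) continuity of $\tilde B$ must be established before speaking of $B/\tilde B$ as continuous --- the paper does this first, via the convolution-of-bounded-and-summable argument; (ii) your upper sandwich starts the iteration \eqref{Khachatryan3.3} from an arbitrary constant $\geq\|\tilde B\|_\infty$, which is not covered by the convergence result of \cite{khach14}; the paper first proves $\sup\tilde B\leq\xi$ from $Q(C^*)\leq(1+M)C^*$ and the monotonicity of $Q(u)/u$, and only then iterates from $\Phi_0\equiv\xi$; (iii) your exclusion of $\varphi=\mathrm{id}$ is acceptable but should be stated as an implicit strengthening of III), since the paper needs exactly the same thing for $k\in(0,1)$ in \eqref{Khachatryan2.37}.
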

\begin{proof} Let equation \eqref{Khachatryan1.2}, in addition to the solution $B \in \mathfrak{M}$ constructed using successive approximations \eqref{Khachatryan2.21}, also have another solution
$B^* \in \mathfrak{M}$. First, note that this solution is a continuous function on  $\mathbb{R}^+$, since $K \in C_M(\mathbb{R}) \cap L_1(\mathbb{R}), 0 \leq \omega_2(t,u) \leq \mu_2(t)u, \, (t, u) \in \mathbb{R}^+ \times \mathbb{R}^+, \, \mu_2 \in L_1(\mathbb{R}^+), \, B^* \in M(\mathbb{R}^+)$ and the convolution of the summable and bounded functions is a continuous function.

Let us now prove that the inequality holds:
\begin{equation}\label{Khachatryan3.2}
B^*(x) \leq B(x), \quad x \in \mathbb{R}^+.
\end{equation}
To this end, we recall that the unique solution $\Phi(x)$ of the auxiliary equation \eqref{Khachatryan2.18} is the pointwise limit of the following iterations (see  \cite{khach14}):
\begin{equation}\label{Khachatryan3.3}
\begin{array}{c}
\displaystyle Q(\Phi_{n+1}(x)) = \int\limits_{0}^\infty (K(x - t) - K(x + t)) \Phi_n(t) (1 + \mu_2(t)) dt, \\
\displaystyle \Phi_0(x) \equiv \xi, \, n = 0,1,2,\ldots, \, x \in \mathbb{R}^+.
\end{array}
\end{equation}

From above, by induction on $n$ we prove the validity of the following inequality:
\begin{equation}\label{Khachatryan3.4}
B^*(x) \leq \Phi_n(x), \quad n = 0, 1, 2, \ldots, \, x \in \mathbb{R}^+.
\end{equation}
Denote by  $C^*: = \sup\limits_{x\in\mathbb{R}^+} B^*(x) <+\infty$.  Then from equation \eqref{Khachatryan1.2} taking into account conditions $1), 2), b_1)$ and $ b_2)$ we have
\[
0 \leq Q(B^*(x)) \leq (1 + M) C^*, \quad x \in \mathbb{R}^+,
\]
whence, by virtue of properties  $I), II)$ for the nonlinearity of  $Q$ we have
\begin{equation}\label{Khachatryan3.5}
0 < Q(C^*) \leq C^*(1+M)= C^* \frac{Q(\xi)}{\xi}.
\end{equation}

Since the function  $\frac{Q(u)}{u}$ monotonically increases on  $(0, +\infty)$ (this property immediately follows from conditions $I), II)$), we obtain from  \eqref{Khachatryan3.5} that $C^* \leq \xi$. Consequently, inequality  \eqref{Khachatryan3.4} is satisfied for the number  $n=0$.

Let \eqref{Khachatryan3.4} holds for some  $n\in \mathbb{N}$. Then, taking into consideration conditions  $1), 2), b_1), b_2)$, inequality  \eqref{Khachatryan2.2} and recurrence relations  \eqref{Khachatryan3.3} we obtain
\[
0 \leq Q(B^*(x)) \leq  \int\limits_{0}^\infty (K(x - t) - K(x + t))(1 + \mu_2(t))B^*(t)dt\leq \]
\[\leq  \int\limits_{0}^\infty (K(x - t) - K(x + t))(1 + \mu_2(t)) \Phi_n(t) \, dt=Q(\Phi_{n+1}(x)),\quad x\in\mathbb{R}^+,
\]
whence, due to the monotonicity of the function  $Q$ we arrive at the inequality  $B^*(x)\leq \Phi_{n+1}(x),$ $x \in \mathbb{R}^+.$ In  \eqref{Khachatryan3.4} passing $n$ to infinity, we obtain
\begin{equation}\label{Khachatryan3.6}
B^*(x) \leq \Phi(x), \quad x \in \mathbb{R}^+.
\end{equation}
Now, by induction on  $n$ we show that
\begin{equation}\label{Khachatryan3.7}
B^*(x) \leq B_n(x), \quad n = 0, 1, 2, \ldots, \, x \in \mathbb{R}^+.
\end{equation}
For $n = 0$ inequality \eqref{Khachatryan3.7} follows immediately from  \eqref{Khachatryan3.6} and the definition of the zeroth approximation in iterations \eqref{Khachatryan2.21}. Let \eqref{Khachatryan3.7} be satisfied for some natural  $n$. Then, taking into consideration \eqref{Khachatryan2.2}, the monotonicity of the function  $\omega_2(t, u)$ in $u$ and the induction hypothesis, from \eqref{Khachatryan2.21} we obtain that the function
\[
Q(B^*(x)) \leq Q(B_{n+1}(x)), \quad x \in \mathbb{R}^+,
\]
whence, due to the monotonicity of the function  $Q$ it follows that  $B^*(x) \leq B_{n+1}(x), \, x \in \mathbb{R}^+$. In \eqref{Khachatryan3.7} passing $n \to \infty$, we arrive at inequality  \eqref{Khachatryan3.2}. Now we prove that
\begin{equation}\label{Khachatryan3.8}
B^*(x) > 0 \quad \text{at } x \in (0, +\infty).
\end{equation}
Indeed, taking into account the fact that  $B^* \in \mathfrak{M}$ from \eqref{Khachatryan1.2} by \eqref{Khachatryan2.2} and conditions $1), 2), b_1)$ we have:
\[
Q(B^*(x)) \geq \int\limits_{r}^\infty (K(x - t) - K(x + t)) \big(B^*(t) + \omega_2(t,B^*(t))\big) \, dt \geq
\]
\[
\geq \int\limits_{r}^\infty (K(x - t) - K(x + t)) B^*(t)dt\geq \biggl(\int\limits_{r}^\infty K(x-t)dt-\int\limits_{r}^\infty K(x+t)dt\biggr)\inf\limits_{t>r}B^*(t)=\]
\[=(\inf\limits_{t>r}B^*(t)) \biggl(\int\limits_{-\infty}^{x-r} K(y)dy-\int\limits_{x+r}^\infty K(y)dy\biggr)> (\inf\limits_{t>r}B^*(t)) \biggl(\int\limits_{-\infty}^{-r} K(y)dy-\int\limits_{r}^\infty K(y)dy\biggr)=0=Q(0),\]
at $x\in(0,+\infty)$ which implies  \eqref{Khachatryan3.8}.

Let us now consider the following function on the set  $(0, +\infty)$:
\begin{equation}\label{Khachatryan3.9}
\mathcal{L}(x) = \frac{\int\limits_0^{\infty} \left( K(x - t) - K(x + t) \right) \left( B^*(t) + \omega_2(t, B^*(t)) \right) dt}{\int\limits_0^{\infty} \left( K(x - t) - K(x + t) \right) \left( B(t) + \omega_2(t, B(t)) \right) dt}.
\end{equation}
It follows from conditions  $1), 2), b_1), b_2),$ inequalities  \eqref{Khachatryan2.2}, \eqref{Khachatryan3.2}, \eqref{Khachatryan3.8} and \eqref{Khachatryan2.42} that
\begin{equation}\label{Khachatryan3.10}
\mathcal{L} \in C(0, +\infty), \quad 0 < \mathcal{L}(x) \leq 1, \quad x \in (0, +\infty).
\end{equation}
Let us now study the limit value of the function $\mathcal{L}(x)$, when $x \to 0^+$. According to the theorem L'Hôpital, we have
\[1\geq \mathcal{L}(+0) =
\frac{\int\limits_0^{\infty} \left( K'(-t) - K'(t) \right) \left( B^*(t) + \omega_2(t, B^*(t)) \right) dt}{\int\limits_0^{\infty} \left( K'(-t) - K'(t) \right) \left( B(t) + \omega_2(t, B(t)) \right) dt}=\]
\[= \frac{-2 \int\limits_0^{\infty} K'(t) \left( B^*(t) + \omega_2(t, B^*(t)) \right) dt} {-2 \int\limits_0^{\infty} K'(t) \left( B(t) + \omega_2(t, B(t)) \right) dt}\geq  \frac{- \int\limits_0^{\infty} K'(t) \left( B^*(t) + \omega_2(t, B^*(t)) \right) dt} {-\xi \int\limits_0^{\infty} K'(t) \left( 1 + \mu_2(t)) \right) dt}=:\varepsilon^*>0.
\]
$K'(t)<0,$ $B^*(t)>0$ for $t>0$ and $\mu_2(t)\geq0$ on $\mathbb{R}^+.$

Let us now verify that  $\varepsilon^* < 1$. Indeed, to this end, we first prove that
the strict inequality holds:
\begin{equation}\label{Khachatryan3.11}
B(x) < \xi, \quad x \in \mathbb{R}^+.
\end{equation}

Using inequality  \eqref{Khachatryan2.2}, the monotonicity of the function  $Q(u)$ and $\omega_2(t, u)$ in $u$ and conditions   $1), 2),$ from \eqref{Khachatryan1.2} we have
\[
0 \leq Q(B(x)) \leq \int\limits_0^{\infty} \left( K(x - t) - K(x + t) \right) \left( \xi + \omega_2(t, \xi) \right) dt\leq\]
\[
 \leq \xi \int\limits_0^{\infty} \left( K(x - t) - K(x + t) \right) (1 + \mu_2(t)) dt <\] \[< \xi \int\limits_0^{\infty} K(x - t) (1 + \mu_2(t)) dt \leq \xi (1 + M) = Q(\xi), \quad x \in \mathbb{R}^+,
\]
whence \eqref{Khachatryan3.11} follows. Taking into account now \eqref{Khachatryan3.11} and \eqref{Khachatryan3.2}, we obtain
\[
\varepsilon^* \leq \frac{-\int\limits_0^{\infty} K'(t) \left( B(t) + \omega_2(t, B(t)) \right) dt}
{-\xi\int\limits_0^{\infty} K'(t) \left( 1 + \mu_2(t) \right) dt}<
\frac{-\int\limits_0^{\infty} K'(t) \left( \xi + \omega_2(t,\xi) \right) dt}
{-\xi\int\limits_0^{\infty} K'(t) \left(1 + \mu_2(t) \right) dt} \leq 1.
\]
Thus, we have proved that
\begin{equation}\label{Khachatryan3.12}
1\geq \mathcal{L}(+0)\geq \varepsilon^*,\quad \varepsilon^*\in(0,1).
\end{equation}
Based on  \eqref{Khachatryan3.10} and \eqref{Khachatryan3.12} the function  $\mathcal{L}(x)$ can be considered continuous on the set  $\mathbb{R}^+$ and it can be asserted that there exists a number  $\sigma^* \in (0, 1)$ such that
\begin{equation}\label{Khachatryan3.13}
\sigma^* \leq \mathcal{L}(x) \leq 1, \quad x \in \mathbb{R}^+.
\end{equation}
From \eqref{Khachatryan3.13} it immediately follows that
\begin{equation}\label{Khachatryan3.14}
\sigma^* Q(B(x)) \leq Q(B^*(x)) \leq Q(B(x)), \quad x \in \mathbb{R}^+.
\end{equation}
Next, using  \eqref{Khachatryan3.14} and making similar arguments as in the proof of Theorem  2, we conclude that there exist numbers $C^*>0$ and $k_* \in (0, 1)$ such that
\begin{equation}\label{Khachatryan3.15}
0 \leq B(x) - B^*(x) \leq C^* k_*^n, \quad n = 1, 2, \dots, \quad x \in \mathbb{R}^+.
\end{equation}
In \eqref{Khachatryan3.15} fixing  $x \in \mathbb{R}^+$ and passing  $n \to \infty$, we obtain  $B(x) = B^*(x), \, x \in \mathbb{R}^+$.

This completes the proof of the theorem.
\end{proof}
\begin{Remark} Theorem 3 established above generalizes the corresponding uniqueness result presented in \cite{khach17}.
\end{Remark}
\subsection{Examples.}
To conclude the paper, we present several illustrative examples of kernels $K$ and nonlinearities $Q$, $\omega_j, j = 1, 2$,  that satisfy all the conditions required by the theorems proved above. We begin with examples of the kernel $K:$
\begin{enumerate}
  \item [$k_1)$] $K(x) = \frac{1}{\sqrt{\pi}} e^{-x^2}, \quad x \in \mathbb{R},$
  \item [$k_2)$] $K(x) = \frac{\sqrt{2}}{\pi} \frac{1}{1 + x^4}, \quad x \in \mathbb{R}.$
\end{enumerate}

Now we give examples for the nonlinearity  $Q$:
\begin{enumerate}
  \item [$q_1)$] $Q(u) = u^p, \quad p > 1$--- \text{is a number}, $\quad u \in \mathbb{R}^+,$
  \item [$q_2)$] $Q(u) = \left(\frac{\sqrt{8u + 1} - 1}{2}\right)^{2p}, \quad p > \frac{3}{2}$--- $ \text{is a number}, \quad u \in \mathbb{R}^+.$
\end{enumerate}
Let us now turn to examples for the functions $\omega_j$, $j = 1, 2$:

\begin{enumerate}
  \item [$\Omega_1)$] $\omega_1(t, u) = e^{-t} \frac{u}{u + 1}, \quad t, u \in \mathbb{R}^+,$
  \item [$\Omega_2)$] $\omega_1(t, u) = \frac{1}{1 + t^3}(1 - e^{-u}), \quad t, u \in \mathbb{R}^+,$
  \item [$\Omega_3)$] $\omega_2(t, u) = e^{-t^2} \left( 1 - e^{-u} \right), \quad t, u \in \mathbb{R}^+,$
  \item [$\Omega_4)$]  $\omega_2(t, u) = \frac{1}{1 + t^2} \ln(1 + u), \quad t, u \in \mathbb{R}^+.$
\end{enumerate}
Let us stop in detail on the example  $q_2)$ and check the fulfillment of conditions I)–III) and \eqref{Khachatryan1.3}. First, from the representation of the function  $Q(u)$ it immediately follows that  $Q \in C(\mathbb{R}^+)$ and $Q(0) = 0$.  Since $Q'(u) = 2p \cdot \left(\frac{\sqrt{8u + 1} - 1}{2 }\right)^{2p-1} \frac{2}{\sqrt{8u + 1}} \geq 0, \, u \in \mathbb{R}^+$,  conditions  I) and \eqref{Khachatryan1.3} are satisfied.
Let us now prove that $Q(u)$  is a convex function on $\mathbb{R}^+$. Indeed, this follows directly from the following relations:
\[
Q''(u) = -\frac{16p}{(8u + 1)\sqrt{8u + 1}} \left( \frac{\sqrt{8u + 1} - 1}{2} \right)^{2p - 1}
+ \frac{4p}{\sqrt{8u + 1}} (2p - 1) \left( \frac{\sqrt{8u + 1} - 1}{2} \right)^{2p - 2}.
\]

\[
\cdot \frac{2}{\sqrt{8u + 1}} = \left( \frac{\sqrt{8u + 1} - 1}{2} \right)^{2p - 2}\frac{1}{8u+1} \cdot
\left(8p (2p - 1)-\frac{8p(\sqrt{8u + 1} - 1)}{\sqrt{8u + 1}}\right)\geq
\]

\[
\geq \left( \frac{\sqrt{8u + 1} - 1}{2} \right)^{2p - 2} \frac{8p}{8u + 1} (2p - 2) \geq 0, \quad u \in \mathbb{R}^+.
\]

Next, we verify that the characteristic equation $Q(u) = (1 + M) u, \, M > 0$ has a unique solution   $\xi > 1$. Since
\[
\frac{Q(u)}{u} \geq \frac{8u+2-2\sqrt{8u + 1}}{4u}\frac{\sqrt{8u + 1}-1}{2} \to +\infty \quad \text{at } u \to +\infty,
\]

then for the function  $\rho(u) := \frac{Q(u)}{u} - (1 + M)$ the following facts hold on the set $[1,+\infty)$
\[
\rho(1)=-M<0,\quad \rho(+\infty)=+\infty,\quad \rho \in C[1, +\infty).
\]
On the other hand, due to the convexity of the function $Q$, it is monotonically increasing on the set $[1, +\infty)$. Therefore, there exists a unique number $\xi > 1$  such that  $Q(\xi) = (1 + M)\xi$.
This confirms that condition II) is also satisfied.
Finally, we proceed to verify condition $III)$ for example $q_2).$ First, observe that in this case, the inverse function of $Q$ admits the following representation:
\begin{equation}\label{Khachatryan3.16}
G(u) = \frac{1}{2} \left(u^\alpha + u^{\frac{\alpha}{2}}\right), \quad \text{where } \alpha: = \frac{1}{p}, \; u \in \mathbb{R}^+.
\end{equation}
As a function  $\varphi(\sigma)$, if we choose  $\varphi(\sigma)=\sigma^\alpha,$ $\sigma\in[0,1],$ then by \eqref{Khachatryan3.16}, we will have
\[
G(\sigma u) = \frac{1}{2} \left(\sigma^\alpha u^\alpha + \sigma^{\frac{\alpha}{2}} u^{\frac{\alpha}{2}}\right) \geq \frac{1}{2} \left(\sigma^\alpha u^\alpha + \sigma^{\alpha}u^{\frac{\alpha}{2}}\right) =\sigma^\alpha G(u) = \varphi(\sigma)G(u), \ \sigma \in [0,1], u \in \mathbb{R}^+.
\]

Thus, condition $III)$ is satisfied, for example, in case $q_2)$. The verification of the corresponding conditions for the remaining examples can be carried out using similar arguments.

\begin{Remark}
It is worth noting that examples $k_1), q_1), \Omega_2)$, and $\Omega_3),$ in addition to their theoretical significance, also have practical applications in various areas of modern natural science (see, for instance, \cite{eng1}-\cite{cerc4}).
\end{Remark}

\textbf{Contact information:}

1)Aghavard Khachaturovich Khachatryan

Armenian National Agrarian University,

Teryan 74, Yerevan, 0009, Republic of Armenia

Department of Higher Mathematics, Physics and Applied Mechanics,

E-mail: Aghavard59@mail.ru

2) Khachatur Aghavardovich Khachatryan

Yerevan State University, 

1 Alex Manoogian, Yerevan, 0025, Republic of Armenia

Faculty of Mathematics and Mechanics,

E-mail: khachatur.khachatryan@ysu.am

3)Haykanush Samvelovna Petrosyan

Armenian National Agrarian University,

Teryan 74, Yerevan, 0009, Republic of Armenia

Department of Higher Mathematics, Physics and Applied Mechanics,

E-mail: Haykuhi25@mail.ru



\begin{thebibliography}{99}
    \bibitem{eng1} N. B. Engibaryan, “On a problem in nonlinear radiative transfer”, Astrophysics 2:1
(1966), 12--14.
    \bibitem{vla2} V. S. Vladimirov, “Solutions of p-adic string equations”, Theoret. and Math. Phys.
167:2 (2011), 539--546.
    \bibitem{vla3} V. S. Vladimirov and Ya. I. Volovich, “Nonlinear dynamics equation in p-adic
string theory”, Theoret. and Math. Phys. 138:3 (2004), 297--309.
    \bibitem{cerc4} C. Cercignani, The Boltzmann equation and its applications, Appl. Math. Sci.,
vol. 67, Springer-Verlag, New-York 1988.
    \bibitem{diek5} O. Diekmann and H. G. Kaper, “On the bounded solutions of a nonlinear
convolution equation”, Nonlinear Anal. 2:6 (1978), 721--737.
    \bibitem{khach6} A. Kh. Khachatryan and Kh. A. Khachatryan, “On the solvability of some
nonlinear integral equations in problems of epidemic spread”, Proc. Steklov Inst.
Math. 306 (2019), 271--287.
    \bibitem{arb7} L. G. Arabadzhyan, “Solutions of certain integral equations of the Hammerstein
type”, J. Contemp. Math. Anal. 32:1 (1997), 17--24.
    \bibitem{arb8} L. G. Arabadzhyan , "Existence of nontrivial solutions of certain linear and nonlinear convolution-type equations", Ukrainian Mathematical Journal, 41:12 (1989), 1359--1367.
    \bibitem{khach9} Kh. A. Khachatryan, “Positive solubility of some classes of non-linear integral
equations of Hammerstein type on the semi-axis and on the whole line”, Izv. Math.
79:2 (2015), 411--430.
    \bibitem{juko10} L. V. Zhukovskaya, “Iterative method for solving nonlinear integral equations describing rolling solutions in string theory”, Theoret. and Math. Phys., 146:3 (2006), 335--342.
    \bibitem{vol11} Ya. I. Volovich, “Some Properties of Dynamical Equations in p -Adic String and SFT Models”, Proc. Steklov Inst. Math., 245 (2004), 281--288.
    \bibitem{khach12} Kh. A. Khachatryan, “Existence and uniqueness of solution of a certain
boundary-value problem for a convolution integral equation with monotone
non-linearity”, Izv. Math. 84:4 (2020), 807--815.
    \bibitem{petr13} H. S. Petrosyan, Kh. A. Khachatryan, “Uniqueness of the solution of a class of
integral equations with sum-difference kernel and with convex nonlinearity on the
positive half-line”, Math. Notes 113:4 (2023), 512--524.
    \bibitem{khach14} Kh. A. Khachatryan, "Solvability of some nonlinear boundary value problems for singular integral equations of convolution type", Trans. Moscow Math. Soc. 81:1 (2020), 1--31.
    \bibitem{asx15} S. N. Askhabov, Kh. Sh. Mukhtarov, "Estimates of solutions of certain nonlinear equations of convolution type and singular integral equations.", Dokl. Akad. Nauk SSSR, 288:2 (1986), 275--278.
    \bibitem{khach16} Kh. A. Khachatryan, “On the solvability of a boundary value problem in
p-adic string theory”, Trans. Moscow Math. Soc., 2018, 101--115.
    \bibitem{khach17} Kh. A. Khachatryan and H. S. Petrosyan, “On the qualitative properties of the
solution of a nonlinear boundary value problem in the dynamic theory of p-adic
strings”, Vestnik S.-Petersburg Univ. Ser. 10. Prikl. Mat. Inform. Prots. Upr. 16:4
(2020), 423--436. (Russian)
    \bibitem{kol18} A.N. Kolmogorov, S.V. Fomin, "Elements of the Theory of Functions and Functional Analysis",
Nauka, Moscow (1976).
    \bibitem{arb19} L. G. Arabadzhyan, N. B. Engibaryan, “Convolution equations and nonlinear functional equations”, J. Soviet Math., 36:6 (1987), 745--791.
    \bibitem{rud20} W. Rudin, Functional analysis, McGraw-Hill Series in Higher Mathematics,
McGraw-Hill Book Co., New York–D¨usseldorf–Johannesburg, 1973.
    \bibitem{gev21} G. G. Gevorkyan and N. B. Engibaryan, “New theorems for the renewal integral
equation”, J. Contemp. Math. Anal. 32:1 (1997), 2--16.
    \bibitem{khach22} A.Kh. Khachatryan, Kh.A. Khachatryan, H.S. Petrosyan, "Questions of existence, absence, and uniqueness of a solution to one class of nonlinear integral equations on the whole line with an operator of Hammerstein-Stieltjes type", Tr. Inst. Mat. Mekhaniki UrO RAN, 30(1), 249--269 (2024). (in Russian)
\end{thebibliography}
\end{document}